\newcommand{\s}{\mbox{\rm s}}
\newcommand{\cq}{\colonequals}
\newcommand{\hansen}{\mbox{\rm H}}
\newcommand{\tprism}{\mbox{\rm tp}}
\newcommand{\stab}{\mbox{\rm stab}}
\newcommand{\conv}{\mbox{\rm conv}}
\newcommand{\vertex}{\mbox{\rm vert}}
\newcommand{\rr}{\mathbb{R}}
\newcommand{\dotcup}{\ensuremath{\mathaccent\cdot\cup}}
\newcommand{\disjoint}{\,\dotcup\,}
\newcommand{\ve}{\varepsilon}
\newtheorem{theorem}{Theorem}[section]
\newtheorem{lemma}[theorem]{Lemma}
\newtheorem{proposition}[theorem]{Proposition}
\newtheorem{corollary}[theorem]{Corollary}
\newtheorem*{conjecture*}{Conjecture}
\theoremstyle{definition}
\newtheorem{definition}[theorem]{Definition}
\theoremstyle{remark}
\title[Face numbers of centrally symmetric polytopes from split graphs]{Face numbers of centrally symmetric polytopes produced from split graphs}
\author[R.\ Freij]{Ragnar Freij}
\address{Chalmers University of Technology and University of Gothenburg\\ S-412 96 G\"o\-te\-borg, Sweden}
\email{freij@chalmers.se}
\author[M.\ Henze]{Matthias Henze}
\address{Fakult\"at f\"ur Mathematik, Otto-von-Guericke-Universit\"at Magdeburg\\ Uni\-ver\-si\-t\"ats\-platz 2\\ 39106 Magdeburg, Germany}
\email{matthias.henze@ovgu.de}
\author[M.\ W.\ Schmitt]{Moritz W.\ Schmitt}
\address{Institut f\"ur Mathematik\\ Freie Universit\"at Berlin\\ Arnimallee 2\\ 14195 Berlin, Germany}
\email{mws@math.fu-berlin.de}
\author[G.\ M.\ Ziegler]{G\"unter M.\ Ziegler}
\address{Institut f\"ur Mathematik\\ Freie Universit\"at Berlin\\ Arnimallee 2\\ 14195 Berlin, Germany}
\email{ziegler@math.fu-berlin.de}
\thanks{The work of MH was supported by the Deutsche Forschungsgemeinschaft (DFG) within the project He 2272/4-1.
The work by MWS and GMZ was supported by the
DFG Research Center \textsc{Matheon}, Project F13, and by ERC Advanced Grant agreement no.~247029-SDModels.}
\date{}
\begin{document}

\begin{abstract}
We analyze a remarkable class of centrally symmetric polytopes,
 the Hansen polytopes of split graphs. We confirm
Kalai's $3^d$-conjecture for such polytopes (they all have at least $3^d$ nonempty faces) 
and show that the Hanner polytopes among them (which have exactly $3^d$ nonempty faces) 
correspond to threshold graphs. Our study produces a new
family of  Hansen polytopes that have only $3^d+{16}$ nonempty faces.
\end{abstract}

\keywords{Hansen polytopes, Hanner polytopes, split graphs, threshold graphs, $3^d$-conjecture}

\maketitle

\section{Introduction}

A convex polytope $P$ is \emph{centrally symmetric} if $P=-P$. 
In 1989 Gil Kalai \cite{kalai1989number}  posed three conjectures on the
numbers of faces and flags of centrally symmetric polytopes, which he named
conjectures A, B and C. Two of these, conjectures B and C, were refuted by
Sanyal et al.\ in 2009 \cite{sanyal2009kalai}. However, conjecture A,
known as the $3^d$-conjecture, was confirmed for dimension $d \le4$ and remains open for $d>4$:

\begin{conjecture*}[$3^d$-conjecture]
Every centrally symmetric convex polytope of dimension $d$ has at least $3^d$
nonempty faces.
\end{conjecture*}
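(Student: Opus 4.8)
The plan is to prove the bound $s(P)\ge 3^d$ by induction on the dimension $d$, where I write $s(P)=\sum_{i=0}^{d}f_i(P)$ for the total number of nonempty faces of $P$ (equivalently, the number of elements of the face lattice other than $\emptyset$). Before attempting the inequality I would first pin down why $3^d$ is the right target and why equality should single out the Hanner polytopes. The key structural observation is that $s$ is multiplicative under the two operations that build all Hanner polytopes out of segments: for the product, every nonempty face of $P\times Q$ is uniquely $F\times G$ with $F,G$ nonempty faces of the factors, so $s(P\times Q)=s(P)\,s(Q)$; and since $s(P^\ast)=s(P)$ by polarity, the dual relation $s(P\oplus Q)=s(P)\,s(Q)$ holds for free sums. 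As the segment has $s=3$, every Hanner polytope has $s=3^d$, which fixes both the conjectural extremal value and the desired equality case, and suggests that a successful induction must reproduce this factor of $3$ at each step.

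The two most tractable classes I would dispose of first. For simplicial (and, dually, simple) centrally symmetric polytopes the centrally symmetric lower-bound theory applies directly: the cross-polytope and the cube are the candidate extremizers, both Hanner with $s=3^d$, and I would aim to derive $s(P)\ge 3^d$ from the nonnegativity of the centrally symmetric $g$-vector (Stanley's inequalities and their refinements). Settling these extreme classes both confirms that the bound is tight exactly as predicted and isolates the real content, the general polytopes, which are neither simplicial nor simple.

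For the general inductive step I would slice $P$ with a generic hyperplane $H$ through the origin and study simultaneously the central section $Q=P\cap H$ and the projection $R=\pi_H(P)$, each a centrally symmetric $(d-1)$-polytope to which the inductive hypothesis applies, yielding $s(Q),s(R)\ge 3^{d-1}$. The aim is to organize the faces of $P$ by their position relative to $H$ — strictly on one side, meeting $H$, or swept out in the projection to $R$ — so that this three-fold split contributes the missing factor of $3$ and delivers $s(P)\ge 3\cdot 3^{d-1}=3^d$. Concretely, I would try to build a symmetry-respecting injection from a threefold copy of the face data of $Q$ (or $R$) into the face lattice of $P$.

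The hard part — and precisely the reason the conjecture is open for $d>4$ — is that no such clean inductive object exists. A central section and a central projection are again centrally symmetric, but the two halves of $P$ cut off by $H$, and the vertex figures one might use instead, are \emph{not} centrally symmetric, so there is no self-similar structure to feed the induction; moreover, faces of $P$ neither inject into nor surject onto the faces of $Q$ or $R$ in a symmetry-respecting way, so the threefold split cannot be forced to account for every face. Triangulation does not rescue this, since passing to a centrally symmetric subdivision only increases the face count and hence bounds $s(P)$ from the wrong side. I expect that the section/projection induction, pushed as far as it will go, yields only a bound with base strictly below $3$ — matching the known partial results — and that closing the gap to the sharp constant requires genuinely new input, such as a centrally symmetric subdivision whose toric $g$-vector is provably nonnegative, or an embedding of the problem into the Mahler-volume framework in which Hanner polytopes are the conjectured extremizers.
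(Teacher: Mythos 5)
You have not proved the statement, and you say so yourself: your final paragraph concedes that the section/projection induction cannot be made to work and that the sharp base $3$ is out of reach by these means. That concession is accurate, but it means the proposal contains no complete argument for any part of the conjecture beyond the (correct, standard) computation that Hanner polytopes achieve $\s(P)=3^d$, via multiplicativity $\s(P\times Q)=\s(P)\,\s(Q)$ and $\s(P^\ast)=\s(P)$. The central inductive step is never instantiated: you do not construct the ``symmetry-respecting injection from a threefold copy of the face data of $Q$'' and no such injection is known to exist; the simplicial/simple case is invoked from the literature (Stanley's inequalities) rather than argued; and the generic case is left entirely open. In short, the proposal is a survey of obstructions, not a proof --- which is the only honest outcome here, since the statement is an open conjecture.

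For calibration against the source: the paper likewise does \emph{not} prove the conjecture. It states it as a conjecture (open for $d>4$) and then confirms it only for the special class of Hansen polytopes of split graphs, by a route quite different from anything you sketch: explicit vertex and facet descriptions of $\hansen(G)$ for a split graph $G=C\cup S$, a partition of the face lattice into primitive, positive, negative, and small faces, and a bijection between primitive faces and certain node partitions, yielding the exact count $\s(\hansen(G))=3^d+p_G(C,S)$ with $p_G(C,S)\ge 0$, and equality precisely for threshold graphs (the Hanner case). If you want to contribute along the paper's lines, the realistic target is such a class-restricted exact face count, not a general induction; your observation that Hanner polytopes force the factor of $3$ at each step is consistent with the paper's threshold-graph characterization, but it is the only point of contact between your plan and what the paper actually establishes.
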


As a contribution to the quest for settling this
conjecture, we investigate a special class of centrally symmetric polytopes,
namely Hansen polytopes, as introduced by Hansen in 1977 \cite{hansen1977certain}.
Hansen polytopes of split graphs served as
counter-examples to conjectures B and C, so it seems natural to
analyze this subclass more thoroughly. As our main result we express the
total number of nonempty faces of such a polytope in terms of  
certain partitions of the node set of the underlying split graph. In
particular, we confirm the $3^d$-conjecture for Hansen polytopes of split
graphs, and show that equality in this class corresponds to threshold graphs.

In Section 2 we
define Hansen polytopes, which are derived from perfect graphs such as, for example, split graphs. 
In Section 3 we analyze the Hansen polytopes of threshold graphs, which are special split graphs. 
It turns out that a Hansen polytope is a Hanner polytope if and only if the underlying
graph is threshold. In Section 4 we describe the Hansen polytopes of general split graphs
and prove the main result mentioned above. Our study also produces
examples of centrally symmetric polytopes that are not
Hanner polytopes and have a total number of nonempty faces very close to the
conjectured lower bound of $3^d$.

\smallskip

\emph{General assumptions.} All our graphs are finite and simple. The
vertex set of a graph $G$ is denoted by $V(G)$, and similarly
the edge set is $E(G)$ if no other notation is specified. The complement of $G$ 
is~$\overline G$. The complete graph on $n$ nodes is~$K_n$.
All polytopes are convex. We denote the polar of a polytope $P$ by $P^\ast$. 
For details on graph theory we refer to Diestel \cite{diestel2010graph}, 
for polytope theory to~\cite{ziegler1995lectures}.

\section{Hansen Polytopes}

Hansen polytopes were introduced by Hansen \cite{hansen1977certain} in 1977.
Some of these centrally symmetric polytopes turn out to have
``few faces''. One constructs them from the stable set
structure of a (perfect) graph $G$ by applying the twisted prism operation to the stable
set polytope. Let us define these terms.

\begin{definition}[Twisted prism]
Let $P \subseteq \rr^d$ be a polytope and $Q \cq \{1\} \times P$ its
embedding in $\rr^{d+1}$. The \emph{twisted prism} of $P$ is
$\tprism(P) \cq \conv(Q \cup -Q)$.
\end{definition}

Twisted prisms are centrally symmetric by construction. We are
interested in twisted prisms of stable set polytopes, which we introduce next;
see also Schrijver \cite[Sec.\ 64.4]{schrijver2003combinatorial}. By $e_i$ we denote 
the $i$th coordinate unit vector.

\begin{definition}
Let $G$ be a graph. The \emph{stable set polytope} of $G$ is 
\[\stab(G) \cq \conv\Big\{\sum_{i \in I} e_i : I \subseteq V(G) \mbox{ stable} \Big\}.\]
\end{definition}

Now we can define the main object of our studies.

\begin{definition}[Hansen polytope]
The \emph{Hansen polytope} of a graph $G$
is defined as $\hansen(G) \cq \tprism(\stab(G))$. 
\end{definition}

Examples of Hansen polytopes are cubes (produced from empty graphs) and crosspolytopes (from complete graphs).
Recall that a graph $G$ is \emph{perfect} if the size of the largest clique of any induced subgraph $H$ of $G$ equals the chromatic number of $H$.
In the rest of the paper we need the following properties of Hansen polytopes:

\begin{lemma}\label{lemma_hansen_prop}
Let $G = (V,E)$ be a graph. 
\begin{enumerate}[\rm(i)]
\item The vertex set of $\hansen(G)$ is $\vertex(\hansen(G)) = \{ \pm(e_0
+ \sum_{i \in I} e_i) : I \subseteq V \mbox{ stable} \}$.
\item If $G$ is perfect, $\{ -1 \leq -x_0 + 2 \sum_{i \in C} x_i \leq 1 :
C \subseteq V \mbox{ clique} \}$ is an irredundant facet description of
$\hansen(G)$.
\item If $G$ is perfect, then the polar of the Hansen polytope of $G$ is affinely equivalent to the Hansen polytope of $\overline G$, in symbols $\hansen(G)^\ast \cong \hansen(\overline{G})$.
\end{enumerate}
\end{lemma}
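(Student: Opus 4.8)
The plan is to handle the three parts in turn, using throughout the reformulation
\[
\hansen(G)=\{(x_0,x)\in\rr\times\rr^{V} : -1\le x_0\le 1,\ x=\lambda a-\mu b\text{ for some }a,b\in\stab(G)\},
\]
where $\lambda\cq\tfrac{1+x_0}{2}$ and $\mu\cq\tfrac{1-x_0}{2}$. This is immediate from $\tprism(P)=\conv(Q\cup -Q)$ with $Q=\{1\}\times P$, since a point of the twisted prism is $\lambda(1,a)+\mu(-1,-b)$ with $\lambda+\mu=1$ and $\lambda,\mu\ge 0$, and then $\lambda-\mu=x_0$. For (i) I would first recall the standard fact that the vertices of $\stab(G)$ are exactly the incidence vectors $\sum_{i\in I}e_i$ of stable sets $I$ (each is a $0/1$ point that cannot be written as a convex combination of the others). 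Since $Q=\{1\}\times\stab(G)$ lies in the supporting hyperplane $\{x_0=1\}$ of $\hansen(G)$, it is a face, so its vertices $e_0+\sum_{i\in I}e_i$ are vertices of $\hansen(G)$; symmetrically for $-Q$ in $\{x_0=-1\}$. As every vertex of $\conv(Q\cup -Q)$ is a vertex of $Q$ or of $-Q$, this yields exactly the asserted list.

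For (ii) the key external input is the theorem of Chv\'atal, Fulkerson and Lov\'asz that, for perfect $G$, one has $\stab(G)=\{x : x_i\ge 0\text{ for all }i,\ \sum_{i\in C}x_i\le 1\text{ for all cliques }C\}$. Note also that $\stab(G)$ is full-dimensional (it contains $0$ and all $e_i$), so $\hansen(G)$ has dimension $|V|+1$. Validity of the listed inequalities is easy: on a vertex $\pm(e_0+\sum_{i\in I}e_i)$ the form $-x_0+2\sum_{i\in C}x_i$ takes the value $\pm(2|I\cap C|-1)\in\{-1,+1\}$, because $|I\cap C|\le 1$ for a stable set $I$ and a clique $C$. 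Hence $\hansen(G)$ is contained in the polyhedron $R$ cut out by these inequalities. To see that each is facet-defining, I would, for a nonempty clique $C$, exhibit $|V|+1$ affinely independent vertices on $\{-x_0+2\sum_{i\in C}x_i=1\}$, namely $(1,e_c)$ for $c\in C$, the point $(-1,0)$, and $(-1,-e_w)$ for $w\in V\setminus C$; a short computation shows their difference vectors are linearly independent. The empty clique gives the two base facets $x_0=\pm1$. Distinct cliques yield distinct facet normals, so the description will be irredundant once completeness is established.

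Completeness, i.e.\ $R\subseteq\hansen(G)$, is the main point, and is where perfectness enters a second time. Given $(x_0,x)\in R$, the inequality for $C=\emptyset$ gives $x_0\in[-1,1]$; fix $\lambda,\mu$ as above and set $a\cq x^+/\lambda$ and $b\cq x^-/\mu$, the scaled positive and negative parts, so that $\lambda a-\mu b=x^+-x^-=x$. It remains to check $a,b\in\stab(G)$. Both are nonnegative by construction, and for any clique $C$ the set $C^+\cq\{i\in C:x_i\ge 0\}$ is again a clique, whence $\sum_{i\in C}a_i=\tfrac1\lambda\sum_{i\in C^+}x_i\le 1$ by the $R$-inequality applied to the sub-clique $C^+$; the bound $\sum_{i\in C}b_i\le 1$ follows symmetrically from $C^-\cq\{i\in C:x_i<0\}$. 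By the Chv\'atal--Fulkerson--Lov\'asz description this gives $a,b\in\stab(G)$, so $(x_0,x)=\lambda(1,a)+\mu(-1,-b)\in\hansen(G)$; the degenerate cases $x_0=\pm1$ are handled directly, the singleton inequalities forcing $x\ge 0$, resp.\ $x\le 0$. The clean choice $a=x^+/\lambda,\ b=x^-/\mu$ combined with the observation that subsets of cliques are cliques is exactly what makes this work, and I expect the main obstacle to be precisely this reduction of the twisted Minkowski condition to the sub-clique inequalities.

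Finally, for (iii) I would argue by polarity. Since $\hansen(G)$ is centrally symmetric and full-dimensional, $0$ is interior, and the vertices of $\hansen(G)^\ast$ are the facet normals of $\hansen(G)$. By (ii) these are $\pm(-e_0+2\sum_{i\in C}e_i)$ for cliques $C$ of $G$. By the weak perfect graph theorem $\overline G$ is perfect, and its stable sets are precisely the cliques of $G$; so by (i) the vertices of $\hansen(\overline G)$ are $\pm(e_0+\sum_{i\in C}e_i)$ for these same sets $C$. The invertible linear map $T\colon(x_0,x)\mapsto(-x_0,2x)$ sends $e_0+\sum_{i\in C}e_i$ to $-e_0+2\sum_{i\in C}e_i$, hence carries the vertex set of $\hansen(\overline G)$ bijectively onto that of $\hansen(G)^\ast$. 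Therefore $T(\hansen(\overline G))=\hansen(G)^\ast$, which is the asserted affine equivalence $\hansen(G)^\ast\cong\hansen(\overline G)$.
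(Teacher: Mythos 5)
Your proof is correct. The paper itself gives essentially no argument for this lemma---(i) is declared obvious, (ii) is cited from Hansen's original article, and (iii) is said to follow from (ii)---so your write-up supplies in full the details that the paper outsources, while following the same outline. For (ii) you invoke the Chv\'atal--Fulkerson--Lov\'asz clique description of $\stab(G)$ for perfect $G$, check validity on the vertices from (i), exhibit $|V|+1$ affinely independent vertices on each hyperplane to get facet-definingness, and prove completeness by splitting $x$ into positive and negative parts and reducing the clique inequalities for $a=x^+/\lambda$ and $b=x^-/\mu$ to the $R$-inequalities for the sub-cliques $C^+$ and $C^-$; this last reduction is the crux, and it works precisely because subsets of cliques are cliques. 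For (iii) you read off the vertices of $\hansen(G)^\ast$ as the facet normals from (ii) and match them with the vertices of $\hansen(\overline G)$ under $(x_0,x)\mapsto(-x_0,2x)$, which is exactly the sense in which the paper's ``(iii) follows from (ii)'' is meant. One cosmetic remark: in (iii) you do not actually need the weak perfect graph theorem, only the tautology that stable sets of $\overline G$ are cliques of $G$, since part (i) holds for arbitrary graphs; perfectness of $G$ itself enters only through (ii).
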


\begin{proof}
Part (i) is obvious, a proof of (ii) can be found in Hansen's paper
\cite{hansen1977certain} and (iii) follows from (ii).
\end{proof}

From now on for the rest of the article we assume all graphs to be perfect.

\section{Hansen Polytopes of Threshold Graphs}

An important class of polytopes that attain the conjectured lower bound of the
$3^d$-conjecture are the so-called Hanner polytopes. These polytopes were
introduced by Hanner \cite{hanner1956intersections} in 1956 and are recursively
defined as follows.

\begin{definition}[Hanner polytope]
A polytope $P \subseteq \rr^d$ is a \emph{Hanner polytope} if it is
either a centrally symmetric line segment or, for $d \geq 2$, the direct
product of two Hanner polytopes or the polar of a Hanner polytope.
\end{definition}

It is neither the case that all Hanner polytopes are Hansen polytopes nor
vice versa. A characterization of their relation is our
first result. Before we can state it, we need to introduce threshold
graphs, a subclass of perfect graphs; an extensive treatment is Mahadev \&
Peled \cite{mahadev1995threshold}. The definition involves
the notions of dominating and isolated nodes: A node in a graph is
\emph{dominating} if it is adjacent to all other nodes; it is
\emph{isolated} if it is not adjacent to any other node.

\begin{definition}[Threshold graph]
A graph $G = (V,E)$ is a \emph{threshold graph} if it can be constructed
from the empty graph by repeatedly adding either an isolated node or a dominating node.
\end{definition}

This class of graphs is closed under taking complements. 

\begin{theorem}\label{hansen_threshold} 
The Hansen polytope $\hansen(G)$ is a Hanner polytope if and only if $G$ is a threshold graph.
\end{theorem}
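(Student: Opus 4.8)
The plan is to prove both implications by induction on $|V|$, matching the two recursive constructions: threshold graphs are built by repeatedly adding an isolated or a dominating node, while a Hanner polytope of dimension at least $2$ is a direct product or the polar of one. The bridge between the recursions is the identity
\[
  \tprism(P \times [0,1]) \cong \tprism(P) \times [-1,1],
\]
valid for every polytope $P$, which I would verify by the linear change of coordinates $y \cq 2x_v - x_0$ in the segment coordinate $x_v$ and the twisting coordinate $x_0$.

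For the ``if'' direction, let $G$ be threshold and obtained from a threshold graph $G'$ by adding a node $v$. If $v$ is isolated, then $\stab(G) = \stab(G') \times [0,1]$, so the identity gives $\hansen(G) \cong \hansen(G') \times [-1,1]$, the product of the Hanner polytope $\hansen(G')$ (induction) with a segment, hence Hanner. If $v$ is dominating, then $v$ is isolated in the threshold graph $\overline G$; applying the isolated-node case to $\overline G$ shows $\hansen(\overline G)$ is Hanner, and then $\hansen(G) \cong \hansen(\overline G)^\ast$ (Lemma~\ref{lemma_hansen_prop}(iii)) exhibits $\hansen(G)$ as the polar of a Hanner polytope.

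For the ``only if'' direction I again induct on $|V|$, the crux being: if $\hansen(G)$ is Hanner of dimension at least $2$, then $G$ has an isolated or a dominating node $v$. Granting this, I peel $v$ off using the standard fact that a direct factor, respectively a direct summand, of a Hanner polytope is again Hanner; this makes $\hansen(G-v)$ Hanner, so $G-v$ is threshold by induction and therefore so is $G$. To establish the crux, recall that a Hanner polytope of dimension at least $2$ is a nontrivial direct product or a nontrivial direct sum. Since the isolated nodes of $\overline G$ are exactly the dominating nodes of $G$, Lemma~\ref{lemma_hansen_prop}(iii) reduces the direct-sum case to the product case: if $\hansen(G)$ is a nontrivial direct sum, then $\hansen(\overline G) \cong \hansen(G)^\ast$ is a nontrivial direct product, so $\overline G$ has an isolated node and hence $G$ a dominating one. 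It therefore suffices to show that a nontrivial product $\hansen(G)$ has an isolated node.

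The product case is the main obstacle. If $\hansen(G) \cong A \times B$ nontrivially, then the ambient space splits as $U \oplus W$ into complementary subspaces with $\dim U, \dim W \geq 1$, such that every facet normal of $\hansen(G)$ lies in $U$ or in $W$. By Lemma~\ref{lemma_hansen_prop}(ii) these normals are, up to sign, the vectors $w_C \cq -e_0 + 2\sum_{i \in C} e_i$ indexed by the cliques $C$ of $G$. Normalizing so that $w_\emptyset = -e_0 \in U$, I classify a node $i$ as type $U$ or type $W$ according to whether $w_{\{i\}}$ lies in $U$ or in $W$; writing $\pi_U, \pi_W$ for the projections along the splitting, this gives $\pi_U(e_i) = e_i$ for type $U$ and $\pi_U(e_i) = \tfrac12 e_0$ for type $W$. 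Computing $\pi_U(w_{\{i,j\}})$ and $\pi_W(w_{\{i,j\}})$ for an edge $\{i,j\}$ then shows that an edge between two nodes of different type, or between two nodes both of type $W$, yields a clique normal with nonzero components in both $U$ and $W$, which is impossible. Hence every type-$W$ node is isolated in $G$, and at least one such node exists because $W \neq \{0\}$ forces a facet normal outside $U$. This produces the required isolated node and completes the induction.
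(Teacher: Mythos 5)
Your ``if'' direction is essentially the paper's: the same identity $\tprism(P\times[0,1])\cong\tprism(P)\times[-1,1]$ handles an added isolated node, and polarity via Lemma~\ref{lemma_hansen_prop}(iii) handles a dominating one. Your ``only if'' direction is genuinely different. The paper takes the product decomposition supplied by the recursive definition, $\hansen(G)=P\times P'$ with $P,P'$ \emph{Hanner by definition}, restricts it to the facet $\stab(G)=Q\times Q'$, identifies $P'\cong\tprism(Q')\cong\hansen(G[V_2])$ via Hanner's twisted-prism theorem so that induction applies, and uses central symmetry of $P\cong\stab(G[V_1])$ to force $\stab(G[V_1])=[0,1]^q$, i.e.\ $G[V_1]$ edgeless. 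You instead work entirely on the dual side with the facet normals $w_C=-e_0+2\sum_{i\in C}e_i$ of Lemma~\ref{lemma_hansen_prop}(ii) and show that \emph{any} nontrivial product splitting forces the type-$W$ nodes to be isolated. I checked the projection computations and they are correct: for an edge with $i$ of type $U$ and $j$ of type $W$ one gets $\pi_U(w_{\{i,j\}})=2e_i\neq0$ and $\pi_W(w_{\{i,j\}})=w_{\{j\}}\neq0$, and for two type-$W$ endpoints $\pi_U(w_{\{i,j\}})=e_0\neq0$ and $\pi_W(w_{\{i,j\}})=w_{\{i\}}+w_{\{j\}}\neq0$, so in either case $w_{\{i,j\}}$ lies in neither summand; and if every node had type $U$ then $U$ would contain $e_0,e_1,\dots,e_{d-1}$ and $W$ would vanish. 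This is a clean way to extract the graph-theoretic content, and it proves slightly more than the paper's argument (it applies to an arbitrary product splitting, not just the definitional one).

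The one step you should not wave through is the ``standard fact'' that a Cartesian factor of a Hanner polytope is again Hanner, which you need in order to apply the induction hypothesis to $\hansen(G-v)$. It is true, but it is not immediate from the recursive definition; you would have to invoke either Hanner's intrinsic characterization of these polytopes (the Helly-number-two property for families of translates, which is manifestly inherited by direct factors) or the uniqueness of the decomposition of a polytope into product-indecomposables. Alternatively you can avoid it entirely, as the paper does, by running your normal computation on the decomposition coming from the definition, whose factors are Hanner by fiat: your own calculation shows that the facet normals lying in $U$ are exactly the $\pm w_C$ with $C$ a clique of $G[V_U]$, so the $U$-factor is $\hansen(G[V_U])$ and the induction hypothesis applies to it directly, while the $W$-factor is a cube on the isolated nodes. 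With that repair (or a citation for the factor fact) your argument is complete.
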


\begin{proof}
($\Leftarrow$) 
We use induction of the number of nodes. If $G =\emptyset$, 
then $\hansen(G)$ is just a centrally symmetric segment, and
therefore a Hanner polytope. Now assume that $G$ has $n+1$ nodes. Since the class
of Hanner polytopes is closed under taking polars and $\hansen(G)^\ast \cong
\hansen(\overline{G})$, we can assume $G = T \disjoint \{v\}$ with $T$ being
threshold. Here $\!\disjoint\!$ denotes the usual disjoint union of graphs and
$v$ is a single node with $v \not\in T$. The stable sets of $G$ are exactly the
stable sets of $T$, with and without the new node $v$. Given a stable set
$S$ of $T$ the vertices of $\hansen(G)$ are of the form $\pm (e_0 + \sum_{i \in
S} e_i)$ and $\pm (e_0 + \sum_{i \in S} e_i + e_{n+1})$, where we assign $v$
the label $n+1$. By the linear transformation
defined by $e_0 \mapsto e_0 - e_{n+1}$, $e_{n+1} \mapsto 2 e_{n+1}$, and $e_i
\mapsto e_i$ for $i = 1, \ldots, n$, we get $\hansen(G) = \hansen(T \disjoint
v) \cong \hansen(G) \times [-1,1]$, which means that $\hansen(G)$ is Hanner.

($\Rightarrow$) Assume that $\hansen(G)$ is Hanner. Again it is enough to cover
just one case, namely $\hansen(G) = P \times P'$ with $P,P'$ being
lower-dimensional Hanner polytopes. The stable set polytope $\stab(G)$ is a facet
of $\hansen(G)$ and can therefore be written as $\stab(G) = Q \times Q'$ with
$Q, Q'$ being faces of $P, P'$, respectively. Since we have $\dim(Q) + \dim(Q')
= \dim(\stab(G)) = \dim(P) + \dim(P') - 1$, we can further assume
that $Q = P$ and $Q'$ is a facet of $P'$. Let $q \cq \dim(Q)$ and $q' \cq
\dim(Q')$. 

We now construct a threshold graph $H'$ on $q'$ nodes such that 
$G = \overline{K_q} \cup H'$. This of course shows that $G$ is threshold as
well. Since $\stab(G)$ is a product, we have $\vertex(\stab(G)) = \vertex(Q)
\times \vertex(Q')$. Each coordinate of a vertex of $\stab(G)$ corresponds to a
node in $G$. Let $V_1 \subseteq V(G)$ be the node set defined by the first $q$
coordinates and $V_2 \subseteq V(G)$ the set defined by the last $q'$
coordinates. Then  
\begin{align*}
\vertex(\stab(G)) &= \Big\{ \sum_{i \in I} e_i : I \subseteq V(G) \mbox{ stable set of } G \Big\}\\
 &= \Big\{ \sum_{i \in I} e_i : I \subseteq V_1 \mbox{ stable set of } G[V_1] \mbox{ and } N(I) \cap V_2 = \emptyset \Big\}\\
 &\quad \times \Big\{ \sum_{i \in I} e_i : I \subseteq V_2 \mbox{ stable set of } G[V_2] \mbox{ and } N(I) \cap V_1 = \emptyset \Big\},
\end{align*}
where $N(I)$ is the set of nodes adjacent to some node in $I$ and $G[V_j]$ is the subgraph of $G$ induced by $V_j,j=1,2$. In particular,
we have $e_i \in \vertex(\stab(G))$ for all $i=1,\dots,q+q'$. From this and the right-hand side of the
equality above, we can deduce that there are no edges between $V_1$ and
$V_2$. By setting $H' \cq G[V_2]$ we get $G = G[V_1] \cup H'$. So what is
left to show is that $G[V_1]$ is an edgeless graph and that $H'$ is threshold.

Let us first see that $H'$ is threshold. Since $P \cong \stab(G[V_1])$ is
at least one-dimensional, $G[V_1]$ has one node minimum, i.e., $|V(H')| <
|V(G)|$. From \cite[Corollary 3.4 and Theorem 7.4]{hanner1956intersections}, we know
that Hanner polytopes are twisted prisms over any of their facets, which means
for us that $P' \cong \tprism(Q') \cong \hansen(H')$.
Thus, by induction, $H'$ is threshold.

As $P \cong \stab(G[V_1])$ is Hanner, it has a center of
symmetry. So there exists a vector $c \in \rr^q$ such that $\stab(G[V_1]) =
-\stab(G[V_1]) + 2c$. The origin and all unit vectors $e_i$ for $1 \leq i \leq
q$ are vertices of $\stab(G[V_1])$, thus we must have $c = (\frac{1}{2},
\ldots, \frac{1}{2})$. This means $\stab(G[V_1]) = [0,1]^q$, which in turn
yields $G[V_1] \cong \overline{K_q}$.
\end{proof}

\begin{corollary}\label{hansen_threshold_3d}
If $G$ is a threshold graph, then $\hansen(G)$ satisfies the $3^d$-conjecture
with equality.
\end{corollary}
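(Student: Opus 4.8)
The plan is to combine Theorem~\ref{hansen_threshold} with the elementary fact that every $d$-dimensional Hanner polytope has exactly $3^d$ nonempty faces. Since $G$ is threshold, Theorem~\ref{hansen_threshold} tells us that $\hansen(G)$ is a Hanner polytope, so the corollary reduces to establishing this face count for Hanner polytopes. Writing $\varphi(P)$ for the number of nonempty faces of a polytope $P$, I would prove by induction along the recursive definition of Hanner polytopes that $\varphi(P) = 3^{\dim P}$.

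For the base case, a centrally symmetric segment has two vertices and the segment itself, hence $\varphi = 3 = 3^1$. For the inductive step there are two constructions to handle. If $P = P_1 \times P_2$ is a direct product, then the face lattice of $P$ is the product of the face lattices of $P_1$ and $P_2$, so that the nonempty faces of $P$ are exactly the products $F_1 \times F_2$ of nonempty faces $F_1$ of $P_1$ and $F_2$ of $P_2$. This gives $\varphi(P) = \varphi(P_1)\varphi(P_2) = 3^{\dim P_1} 3^{\dim P_2} = 3^{\dim P}$. If instead $P = Q^\ast$ is the polar of a Hanner polytope $Q$, I would use that polarity induces an anti-isomorphism of face lattices which interchanges the empty face with the whole polytope and restricts to a bijection on proper nonempty faces. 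Consequently the number of proper nonempty faces of $P$ equals that of $Q$, and adding one for the polytope itself in each case yields $\varphi(P) = \varphi(Q) = 3^{\dim Q} = 3^{\dim P}$, where the final equality holds because polarity preserves dimension for full-dimensional polytopes having the origin in their interior, as is the case for centrally symmetric polytopes.

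The only point requiring care is the bookkeeping of the empty face under polarity: one must remember that the empty face and the full polytope are swapped by the anti-isomorphism, so that equality of the nonempty-face counts follows only after checking that these two extremal elements of the face lattices contribute symmetrically to the total. This is the main (though minor) obstacle; the product case is entirely routine. Once the induction is complete, applying it to $P = \hansen(G)$ shows that $\hansen(G)$ has exactly $3^{\dim \hansen(G)}$ nonempty faces, which is precisely the $3^d$-conjecture with equality.
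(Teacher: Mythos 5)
Your proposal is correct and follows exactly the route the paper intends: the corollary is an immediate consequence of Theorem~\ref{hansen_threshold} together with the standard fact that every $d$-dimensional Hanner polytope has exactly $3^d$ nonempty faces, which the paper leaves implicit and you verify by a routine induction over the recursive construction. Your handling of the product and polarity cases (including the swap of the empty face and the whole polytope under the face-lattice anti-isomorphism) is accurate.
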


Kalai suggests in~\cite{kalai1989number} that the Hanner polytopes
should be the \emph{only} polytopes that satisfy the $3^d$-conjecture with equality.
We will see below that other polytopes at least get close.

We also note that not all Hanner polytopes can be represented as Hansen
polytopes of perfect graphs. For example, the product of two octahedra
$O_3 \times O_3$ is a Hanner polytope but not a Hansen polytope.

\section{Hansen Polytopes of Split Graphs}

Now we will analyze the Hansen polytopes of split graphs. 
It is easy to verify and well-known that all threshold graphs are split  
and that all split graphs are perfect.

\begin{definition}[Split graph]
A graph $G$ is called \emph{split graph} if the node set can be partitioned
into a clique $C$ and a stable set $S$.
\end{definition}


The main result of our paper appears in this section as Theorem \ref{main_thm}.
We will prove it with a partitioning technique for the faces of Hansen
polytopes of split graphs. This partitioning will be described first.

\subsection{Partitioning the faces of Hansen polytopes of split graphs.}
\label{subsec_partition}

Let $G = C \cup S$ be a split graph with $C = \{ c_1, \ldots, c_k \}$ and
$S = \{ s_1, \ldots, s_\ell \}$. A stable set of $G$ is either of the form $A$
or $A \cup \{c_i\}$ for $A \subseteq S$. Similarly, a clique of $G$ must be
either of the form $A$ or $A \cup \{s_j\}$ for $A \subseteq C$. Thanks to the
simple composition of stable sets and cliques of split graphs, we can give a
complete description of the vertices and facets of $\hansen(G)$. In the
following we omit set parentheses of singletons in order to enhance readability.

\begin{itemize}[$\bullet$]
\item The vertices of $\hansen(G)$ will be denoted by 
\begin{compactitem}
	\item[(1)] $(\ve, A)$ with $\ve = \pm$ and $A \subseteq S$,
	\item[(2)] $(\ve, A \cup c_i)$ with $\ve = \pm$, $A \subseteq S$ and $A \cup c_i$ stable. 
\end{compactitem}

\item The facets of $\hansen(G)$ will be denoted by 
\begin{compactitem}
	\item[(1)] $[\ve, A]$ with $\ve = \pm$ and $A \subseteq C$, 
	\item[(2)] $[\ve, A \cup s_j]$ with $\ve = \pm$, $A \subseteq C$ and $C \cup s_j$ being a clique. 
\end{compactitem}
\end{itemize}
We will refer to the different kinds of vertices and facets as
\emph{type-(1)-vertices/-facets} and \emph{type-(2)-vertices/-facets} according
to the enumeration above. In the next step we discuss the vertex-facet
incidences.
By Lemma \ref{lemma_hansen_prop} a vertex of $\hansen(G)$ is contained in a 
facet if and only if they have the same sign and their defining subsets of $V(G)$ 
meet or if they have different signs and the defining subsets are disjoint.
\smallskip

\noindent Type-(1)-facets:
\begin{itemize}[$\bullet$]

\item $(\ve, A) \in [\ve', B] \quad\Longleftrightarrow\quad \ve = -\ve'$

\item $(\ve, A \cup c_i) \in [\ve', B] \quad\Longleftrightarrow\quad (c_i \in B \mbox{ and }
\ve = \ve') \mbox{ or } (c_i \not\in B \mbox{ and } \ve = -\ve')$

\end{itemize}

\smallskip

\noindent Type-(2)-facets:
\begin{itemize}[$\bullet$]

\item $(\ve, A) \in [\ve', B \cup s_j] \quad\Longleftrightarrow\quad (s_j \in A \mbox{ and }
\ve = \ve') \mbox{ or } (s_j \not\in A \mbox{ and } \ve = -\ve')$

\item $(\ve, A \cup c_i) \in [\ve', B \cup s_j] \quad\Longleftrightarrow\\ (\ve' = \ve
\mbox{ and } (c_i \in B) \mbox{ or } (s_j \in A)) \mbox{ or } (\ve' = -\ve \mbox{
and } c_i \not\in B \mbox{ and } s_j \not\in A)$
\end{itemize}
\smallskip

\noindent
Observe that the events $c_i \in B$ and $s_j \in A$ are mutually exclusive if
$A \cup c_i$ is stable and $B \cup s_j$ is a clique. The next two lemmas will
be of good use later on.

\begin{lemma}\label{facet_intersection}
Let $G = C \cup S$ be a split graph. Choose $A,B \subseteq C$ and $U
\subseteq S$ such that $A \cup U$ and $B \cup U$ are cliques. Then we have
\begin{enumerate}[\rm(i)]

\item $[\ve, A \cup U] \cap [\ve, B \cup U] = [\ve, (A \cap B) \cup U] \cap [\ve,
A \cup B \cup U]$

\item $[\ve, A \cup U] \cap [-\ve, B \cup U] \subseteq [\ve, A] \cap [-\ve, B]$

\end{enumerate}
\end{lemma}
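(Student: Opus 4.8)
The plan is to translate both statements into the explicit facet inequalities of Lemma~\ref{lemma_hansen_prop}(ii). For a clique $K \subseteq V(G)$ I would write $f_K(x) \cq -x_0 + 2\sum_{i \in K} x_i$, so that the facet $[\ve, K]$ is exactly $\{x \in \hansen(G) : f_K(x) = \ve\}$ and, crucially, $-1 \le f_K(x) \le 1$ holds for \emph{every} point $x \in \hansen(G)$. (One checks against the incidence list that this sign convention is the right one: a type-$(1)$ vertex $(\ve, A)$ lands on $[\ve', B]$ precisely when $f_B$ evaluates to $-\ve = \ve'$.) The whole argument then rests on two elementary observations: a valuation-type identity among these linear functionals, and the principle that if a sum or difference of quantities from $[-1,1]$ attains its extreme value $\pm 2$, then each term is forced to its own extreme.

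Before using these functionals I would check that every set appearing as a second coordinate is genuinely a clique, so that the corresponding inequality is valid on all of $\hansen(G)$ and the bound $-1 \le f \le 1$ may be invoked. Since $U \subseteq S$ and $S$ is stable, the hypothesis that $A \cup U$ is a clique already forces $|U| \le 1$. The sets $A$, $B$ and $(A \cap B) \cup U$ are subsets of the cliques $C$, $C$ and $A \cup U$ respectively, hence cliques; and $A \cup B \cup U$ is a clique because $A \cup B \subseteq C$ while every node of $A \cup B$ is adjacent to the single node of $U$ (if any), by the clique hypotheses on $A \cup U$ and $B \cup U$.

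For part~(i) the key is the identity $\mathbf{1}_A + \mathbf{1}_B = \mathbf{1}_{A \cap B} + \mathbf{1}_{A \cup B}$, which, after adding the common block $U$ and the two $-x_0$ terms, gives the functional identity $f_{A \cup U} + f_{B \cup U} = f_{(A \cap B) \cup U} + f_{(A \cup B) \cup U}$. Now take $x$ in the left-hand intersection, so that $f_{A \cup U}(x) = f_{B \cup U}(x) = \ve$ and the common sum equals $2\ve$; since the two functionals on the right each lie in $[-1,1]$ on $\hansen(G)$ and are forced to sum to $2\ve$, both must equal $\ve$, placing $x$ in the right-hand intersection. The reverse inclusion reads the very same identity in the other direction, yielding equality.

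For part~(ii) I would instead subtract: the $U$-blocks cancel, giving the identity $f_{A \cup U} - f_{B \cup U} = f_A - f_B$. A point $x \in [\ve, A \cup U] \cap [-\ve, B \cup U]$ satisfies $f_{A \cup U}(x) = \ve$ and $f_{B \cup U}(x) = -\ve$, so $f_A(x) - f_B(x) = 2\ve$; as $f_A(x), f_B(x) \in [-1,1]$, this extremal difference forces $f_A(x) = \ve$ and $f_B(x) = -\ve$, i.e. $x \in [\ve, A] \cap [-\ve, B]$. I expect the only real care to lie in the bookkeeping: pinning down the sign convention for $[\ve, K]$ and confirming the auxiliary sets are cliques so that the bounds apply. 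Once the two functional identities are in hand, the extremality step is immediate in both parts.
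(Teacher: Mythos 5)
Your proof is correct. The paper itself omits the argument, remarking only that the lemma ``easily follows from the vertex-facet-incidences,'' i.e.\ it implicitly suggests checking, vertex by vertex, which vertices of $\hansen(G)$ satisfy the combinatorial incidence rules on each side. Your route is a clean structural reformulation of the same content: you work with the valid linear functionals $f_K(x) = -x_0 + 2\sum_{i\in K}x_i$ from Lemma~\ref{lemma_hansen_prop}(ii), verify the sign convention $[\ve,K]=\{x\in\hansen(G): f_K(x)=\ve\}$ against the incidence list, and then exploit the valuation identity $\mathbf{1}_A+\mathbf{1}_B=\mathbf{1}_{A\cap B}+\mathbf{1}_{A\cup B}$ together with the extremality principle for quantities confined to $[-1,1]$. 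Two details you handle that a careless write-up might miss: the observation that $|U|\le 1$ (so that $A\cup B\cup U$ is indeed a clique and the bound $-1\le f_{A\cup B\cup U}\le 1$ is legitimately available), and the fact that the same functional identity read in both directions gives genuine equality in (i) rather than just one inclusion. Compared with the vertex-by-vertex incidence check, your argument buys a uniform treatment of both parts from two one-line identities; the combinatorial check is more elementary but requires a small case analysis over vertex types and signs. Either way the lemma is established; no gap.
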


\begin{proof}
We skip the proof, which easily follows from the vertex-facet-incidences.
\end{proof}

In particular, part (i) shows that every face can be written using at most two
type-(1)-facets of each sign. Indeed, for $A_1, \dots, A_t \subseteq C$, we get
inductively $\bigcap_{i=1}^t [\ve, A_i] = [\ve, \bigcap_{i=1}^t A_i] \cap [\ve,
\bigcup_{i=1}^t A_i]$. The next definition relies on this fact and will be
essential for arguments in the upcoming parts.

\begin{definition}\label{def_partition}
For a split graph $G = C \cup S$ we define the following four classes of faces of
$\hansen(G)$:
\begin{compactitem}[$\bullet$]
\item {\em Primitive} faces $F$, that are not contained in any type-(1)-facet.
\item {\em Positive} faces $[+,A] \cap [+,B] \cap F$, with $A \subseteq B$ and $F$ primitive.
\item {\em Negative} faces $[-,A] \cap [-,B] \cap F$, with $A \subseteq B$ and $F$ primitive.
\item {\em Small} faces $G$, that are contained in type-(1)-facets of both signs.
\end{compactitem}
\end{definition}

\noindent
This definition gives a partition of the faces
of $\hansen(G)$. For the primitive faces we get a nice characterization
with respect to the containment of special vertices.

\begin{lemma}\label{char_primitive}
Let $G = C \cup S$ be a split graph. A face $F$ of $\hansen(G)$ is
primitive if and only if it contains type-(1)-vertices of both signs.
\end{lemma}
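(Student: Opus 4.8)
The plan is to read off both implications from the vertex--facet incidences recorded above, isolating a short extremal argument for the one genuinely nontrivial point.

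For the ``if'' direction I would use that a type-(1)-vertex $(\ve,A)$ lies in a type-(1)-facet $[\ve',B]$ exactly when $\ve=-\ve'$, since $A\subseteq S$ and $B\subseteq C$ are automatically disjoint. Thus, if $F$ contains type-(1)-vertices of both signs, then for every type-(1)-facet $[\ve',B]$ the type-(1)-vertex of $F$ whose sign equals $\ve'$ fails to lie in $[\ve',B]$, so $F\not\subseteq[\ve',B]$. As this holds for all type-(1)-facets, $F$ is primitive.

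For the converse I would argue by contraposition, invoking the central symmetry of $\hansen(G)$ to assume without loss of generality that $F$ contains no type-(1)-vertex of sign $+$; the goal is then to exhibit a single type-(1)-facet containing $F$. The natural candidate is $[+,B]$ with $B\cq\{c_i\in C:(+,A\cup c_i)\in F\text{ for some }A\subseteq S\}$. Granting for the moment that $B$ is \emph{sign-consistent}, that is, that no clique node occurs among the type-(2)-vertices of $F$ with both signs, one verifies from the type-(1)- and type-(2)-incidences that every vertex of $F$ lies in $[+,B]$: the type-(1)-vertices of $F$ all have sign $-$ and hence lie in $[+,B]$; a type-(2)-vertex $(+,A\cup c_i)\in F$ has $c_i\in B$ by definition; and a type-(2)-vertex $(-,A\cup c_i)\in F$ has $c_i\notin B$ by sign-consistency. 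Therefore $F\subseteq[+,B]$ and $F$ is not primitive. The $\pm$-symmetry handles the case of no sign-$-$ type-(1)-vertex identically, and the improper face $\hansen(G)$, which contains $\pm e_0$, is covered trivially.

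The main obstacle is establishing sign-consistency, and here I would leave the purely combinatorial incidences behind and use that $F$ is a genuine face. Write $F=\{x\in\hansen(G):\langle w,x\rangle=h\}$ for a supporting functional $w=(w_0,w_1,\dots)$ attaining its maximum $h$ on $\hansen(G)$. Suppose for contradiction that both $(+,A\cup c_i)$ and $(-,A'\cup c_i)$ lie in $F$. Comparing each with the type-(1)-vertex obtained by deleting $c_i$, the values of $w$ at $(+,A)$ and at $(-,A')$ come out to be $h-w_{c_i}$ and $h+w_{c_i}$, respectively. Since $h$ is the maximum, both are $\le h$, which forces $w_{c_i}=0$; but then $(+,A)$ and $(-,A')$ are themselves vertices of $F$, contradicting the assumption that $F$ has no type-(1)-vertex of sign $+$. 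This rules out any sign conflict, so $B$ is well-defined and the incidence check above completes the proof.
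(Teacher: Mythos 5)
Your proof is correct, and while the easy direction coincides with the paper's, your argument for the nontrivial direction is genuinely different. The paper proves ``primitive $\Rightarrow$ both signs'' directly on the facet side: a primitive face is written as an intersection $\bigcap_{i\in I}[+,A_i\cup s_i]\cap\bigcap_{j\in J}[-,B_j\cup s_j]$ of type-(2)-facets only, Lemma~\ref{facet_intersection}(ii) forces the multisets $\{s_i:i\in I\}$ and $\{s_j:j\in J\}$ to be disjoint, and the incidence rules then exhibit $(+,\{s_i:i\in I\})$ and $(-,\{s_j:j\in J\})$ as vertices of $F$. You instead work by contraposition on the vertex side, producing the explicit type-(1)-facet $[+,B]$ with $B\cq\{c_i:(+,A\cup c_i)\in F\text{ for some }A\}$ that contains any face lacking positive type-(1)-vertices; the one delicate point, that no clique node occurs among the type-(2)-vertices of $F$ with both signs, you settle with a supporting-functional computation ($w_{c_i}\ge 0$ and $w_{c_i}\le 0$, hence $w_{c_i}=0$, hence the deleted type-(1)-vertices would lie in $F$) rather than combinatorially. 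Both arguments are complete, including your handling of the improper and empty faces. What each buys: the paper's facet-side representation is not incidental --- the notion of an $(S^+,S^-)$-primitive face extracted from it is reused heavily in the proof of Lemma~\ref{main_lemma}, so their proof doubles as setup for the main count; your proof is more self-contained (it does not need Lemma~\ref{facet_intersection}) and yields the additional structural fact about sign-consistency of clique nodes in proper faces, at the cost of briefly stepping outside the purely combinatorial incidence model into a geometric supporting-hyperplane argument.
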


\begin{proof}

$(\Rightarrow)$ Assume $F$ is primitive, i.e., we can write it as \[F =
\bigcap_{i \in I} [+,A_i \cup s_i] \cap \bigcap_{j \in J} [-,B_j \cup s_j]\]
for some multisets $I$ and $J$.  If we had $\{s_i : i \in I\} \cap \{s_j : j
\in J\} \neq \emptyset$, then Lemma \ref{facet_intersection} (ii) would yield a
contradiction to primitivity. Thus, these two multisets must be disjoint. We
get the vertex-facet incidences
\begin{compactitem}

\item $(+,A) \in F \ \Longleftrightarrow \ \{ s_i : i \in I \} \subseteq A
\subseteq S \setminus \{ s_j : j \in J \}$,

\item $(-,A) \in F \ \Longleftrightarrow \ \{ s_j : j \in J \} \subseteq A
\subseteq S \setminus \{ s_i : i \in I \}$.

\end{compactitem}
This means we can always find positive and negative type-(1)-vertices of $F$.

$(\Leftarrow)$ If $F$ has a vertex $(\ve,A)$ it cannot be contained in a facet
$[\ve,B]$ for any $B \subseteq C$ according to the rules above. So if $F$
contains type-(1)-vertices of both signs, it cannot be contained in any
type-(1)-facet. This means $F$ is primitive. 
\end{proof}

\subsection{The number of nonempty faces of Hansen polytopes of split graphs.}
We need the following definition to state the main theorem.

\begin{definition}
Let $G = C \cup S$ be a split graph. Then we denote by $p_G(C,S)$ the
number of partitions of the form $(C^+, C^-, C^0, S^+, S^-, S^0)$ with $C = C^+
\cup C^- \cup C^0$ and $S = S^+ \cup S^- \cup S^0$ such that either $C^+ \cup
C^- \neq \emptyset$ or $S^+ \cup S^- \neq \emptyset$, and the following hold:
\begin{compactitem}
\item[(A)] Every element of $C^+ \cup C^-$ has a neighbor in $S^+ \cup S^-$.
\item[(B)] Every element of $S^+ \cup S^-$ has a nonneighbor in $C^+ \cup C^-$.
\end{compactitem}
\end{definition}

\noindent
In the case of Hansen polytopes of split graphs it turns out that $p_G(C,S)$
is exactly the number of faces that we have additionally to $3^d$.
By $\s(P)$ we denote the number of nonempty faces of the polytope $P$.

\begin{theorem}\label{main_thm}
Let $G = C \cup S$ be a split graph on $d-1$ nodes. Then 
\[ 
    \s(\hansen(G)) = 3^d + p_G(C,S). 
\] 
In particular, Hansen polytopes of split graphs satisfy the $3^d$-conjecture.
\end{theorem}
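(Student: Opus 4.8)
The plan is to count the nonempty faces of $\hansen(G)$ by sorting them into the four classes of Definition~\ref{def_partition} — primitive, positive, negative, and small — which form a partition of the nonempty faces, and to tally each class separately in terms of the combinatorial data of $G$. To calibrate the bookkeeping I would first inspect the two extreme cases. For the empty graph $\hansen(G)=[-1,1]^d$, one checks that there are $3^{d-1}$ primitive faces (those on which the coordinate $x_0$ is not fixed), $3^{d-1}$ positive faces, $3^{d-1}$ negative faces, and no small faces, while $p_G(C,S)=0$; for the complete graph $\hansen(G)$ is the crosspolytope, where the whole polytope is the unique primitive face and again $p_G=0$. These cases show that the ``clean'' total $3^d$ is exactly what the four classes contribute when the correction vanishes, so the whole task is to understand how a general split graph perturbs these counts.

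For the primitive faces I would use Lemma~\ref{char_primitive}: such an $F$ is an intersection of type-(2)-facets and, as in that proof, is governed by a pair of disjoint subsets $S^+,S^-\subseteq S$ (the stable nodes appearing in its positive, resp.\ negative, type-(2)-facets), whose type-(1)-vertices are precisely the $(+,A)$ with $S^+\subseteq A\subseteq S\setminus S^-$ and the $(-,A)$ with $S^-\subseteq A\subseteq S\setminus S^+$. For the positive faces I would then add the type-(1)-facet data: by the reduction following Lemma~\ref{facet_intersection}(i), each positive face is $[+,A]\cap[+,B]\cap F$ for a uniquely determined nested pair $A\subseteq B\subseteq C$ over a primitive core $F$, and symmetrically, with opposite sign, for the negative faces; the small faces carry such nested $C$-data of both signs. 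Recording this information as a signed assignment $C=C^+\disjoint C^-\disjoint C^0$ and $S=S^+\disjoint S^-\disjoint S^0$ on the nodes is exactly where the partitions $(C^+,C^-,C^0,S^+,S^-,S^0)$ enter the count.

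The crux is to determine, within each class, \emph{which} signed assignments actually occur and with what multiplicity — equivalently, to show that the assignment-to-face correspondence is well defined and bijective onto the relevant faces. I expect the two requirements in the definition of $p_G(C,S)$ to surface precisely as the non-degeneracy conditions here: condition (A), that every node of $C^+\cup C^-$ have a neighbor in $S^+\cup S^-$, should be what forbids a clique coordinate from being fixed redundantly (so that distinct data yield distinct faces), while condition (B), that every node of $S^+\cup S^-$ have a nonneighbor in $C^+\cup C^-$, should be the dual realizability condition guaranteeing via Lemma~\ref{facet_intersection}(ii) that the prescribed facets genuinely meet in a nonempty face rather than collapsing. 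The assignments with $C^+\cup C^-=S^+\cup S^-=\emptyset$, which are excluded from $p_G$, are exactly those that reproduce the baseline $3^d$; every other admissible assignment should contribute a genuinely extra face.

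I expect the main obstacle to be exactly this bijection analysis: controlling the redundancy among facet descriptions — two different intersections of type-(1)- and type-(2)-facets defining the same face — so that no face is double-counted and none is missed, and proving that conditions (A) and (B) together are \emph{necessary and sufficient} for a signed assignment to determine a valid distinct face. Verifying this uniformly across all four classes, and then checking that the positive, negative, and small contributions recombine with the primitive count to give precisely $3^d$ plus a single copy of $p_G(C,S)$, is the delicate combinatorial heart of the argument. Once it is in place, the inequality $\s(\hansen(G))\ge 3^d$ is immediate, since $p_G(C,S)\ge 0$ by definition.
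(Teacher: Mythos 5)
Your opening move --- partitioning the nonempty faces into primitive, positive, negative, and small faces and counting each class --- is exactly the paper's first step, and your calibration on the cube and the crosspolytope is sound. But two structural ideas that make the count actually close are missing. First, you propose to count the small faces directly via ``nested $C$-data of both signs.'' The paper never does this: a small face is contained in type-(1)-facets of both signs, hence corresponds under the polarity bijection and Lemma \ref{char_primitive} to a \emph{primitive} face of $\hansen(G)^\ast\cong\hansen(\overline G)$, so the small faces are counted by $f_p(\overline G)$ (adjusting by $1$ for the empty face). Without this reduction you face a fourth, genuinely different count with no indicated method. Second, you expect (A) and (B) to act \emph{jointly} as non-degeneracy conditions within each class, so that each admissible signed assignment ``contributes a genuinely extra face.'' That is not how $p_G$ enters: in the paper, (A) alone is in bijection with the primitive faces of $G$ (so $f_p(G)=|\Pi_A|$), (B) alone with the primitive faces of $\overline G$, i.e.\ the small faces, while the positive and negative classes each contribute exactly $3^{d-1}$ regardless of the graph. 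The excess $p_G=|\Pi_A\cap\Pi_B|-1$ then emerges from the inclusion--exclusion identity $3^{d-1}=|\Pi_A|+|\Pi_B|-|\Pi_A\cap\Pi_B|$, which rests on the observation that a partition violating (A) automatically satisfies (B) and vice versa. This identity is absent from your plan, and without it the four class counts do not visibly recombine into $3^d+p_G(C,S)$.

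There is also a concrete false step: the claim that each positive face is $[+,A]\cap[+,B]\cap F$ for a \emph{uniquely determined} nested pair $A\subseteq B\subseteq C$ over a primitive core $F$. For $P_4$ with $S^+=S$ and $S^-=\emptyset$ there are nine $(S^+,S^-)$-primitive faces and nine nested pairs in $C$, yet only nine positive faces over that choice of $(S^+,S^-)$, not eighty-one; the representation is massively redundant. The paper avoids this by fixing $(S^+,S^-)$ and counting the possible \emph{vertex sets} of positive faces over it, showing the membership status of each $z\in C$ takes one of three independently controllable states, giving $3^{|C|}$ faces per choice and $3^{d-1}$ in total. Finally, the identity $f_p(G)=|\Pi_A|$ --- the explicit maps $\Psi$ and $\Phi$ and the verification that they are mutually inverse --- is the bulk of the paper's proof and appears in your sketch only as ``the main obstacle,'' so what you have is a partially correct plan rather than a proof.
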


\begin{proof} Let $\Pi$ be the set of all partitions and $\Pi_A, \Pi_B
\subseteq \Pi$ be the subsets for which (A) and (B) hold, respectively. Observe
that if (A) fails for a partition, that there must be a node in $C^+ \cup C^-$
which is not adjacent to any node in $S^+ \cup S^-$. Thus, this partition
fulfills (B). From this we get $\Pi_A^c \subseteq \Pi_B$, where $\Pi_A^c$ is
the complement of $\Pi_A$ in $\Pi$. Analogously, $\Pi_B^c \subseteq \Pi_A$
holds. This yields by some simple counting and inclusion-exclusion \[ 3^{d-1} =
|\Pi| = |\Pi_A| + |\Pi_B| - |\Pi_A \cap \Pi_B|.\] Since $p_G(C,S) = |\Pi_A \cap
\Pi_B| - 1$, we thus need to show that \[ \s(\hansen(G)) = 3^d +
|\Pi_A \cap \Pi_B| - 1 = 2 \cdot 3^{d-1} + |\Pi_A| + |\Pi_B| - 1.\] For this we
are going to use the partitioning of the face lattice of $\hansen(G)$, that was
introduced in Definition \ref{def_partition}. Let $f_p(G)$ be the number of
primitive faces of $\hansen(G)$, $f_+(G)$ be the number of positive, and
$f_-(G)$ be the number of negative ones. Regarding the small faces, one
observes the following: If $F$ is small, then by definition it is contained in
type-(1)-facets of both signs.  Type-(1)-facets correspond to type-(1)-vertices
of the same sign of the polar polytope (via the usual bijection $F \mapsto
F^\ast$ between the face lattice of a polytope and its polar). Lemma
\ref{char_primitive} yields that $F^\ast$ must be a primitive face of
$\hansen(G)^\ast \cong \hansen(\overline{G})$.  Hence, \[ \s(\hansen(G)) = f_p(G) + f_+(G) + f_-(G) + f_p(\overline{G}) - 1. \] All we
need in order to finish this proof is the following lemma.

\begin{lemma}\label{main_lemma}
In the setting above we have
\begin{enumerate}[\rm(i)]
\item $f_+(G) = f_-(G) = 3^{d-1}$
\item $f_p(G) = |\Pi_A|$ and $f_p(\overline{G}) = |\Pi_B|$
\end{enumerate}
\end{lemma}

From this lemma the theorem obviously follows.
\end{proof}

\begin{proof}[Proof of Lemma \ref{main_lemma}]
For this proof we need to refine the notion of a primitive face. Given multisets
$S^+ \cq \{ s_i : i \in I \}$ and $S^- \cq \{ s_j : j \in J \}$, a primitive
face of the form \[ \bigcap_{i \in I} [+,A_i \cup s_i] \cap \bigcap_{j \in J}
[-,B_j \cup s_j] \] will be called \emph{$(S^+,S^-)$-primitive}.

(i) It is clear that a facet $[\ve,A]$ gets mapped to $[-\ve,A]$ by the
bijection $x \mapsto -x$. We therefore have $f_+(G) = f_-(G)$, and showing
$f_+(G) = 3^{d-1}$ will finish this part of the proof. So let us consider a
positive face $P = [+,A'] \cap [+,A] \cap F$, where $A' \subseteq A \subseteq
C$ and \[ F = \bigcap_{i \in I} [+,A_i \cup s_i] \cap \bigcap_{j \in J} [-,B_j
\cup s_j] \] being primitive. As noted in the proof of Lemma
\ref{char_primitive}, the multisets $\{s_i : i \in I\}$ and $\{s_j : j \in J\}$ are
disjoint and $P$ contains a vertex $(-,X)$ if and only if it is contained in $F$,
i.e., if and only if $\{ s_j : j \in J \} \subseteq X \subseteq S \setminus \{ s_i : i
\in I \}$.
Since there are $3^{|S|}$ many ways to choose two
disjoint subsets from $S$, it suffices to show that for fixed $\{ s_i : i \in I
\}$ and $\{ s_j : j \in J \}$, we have $3^{|C|}$ many positive faces of the
above form. To this end let $F$ be a fixed $(\{s_i : i \in I\},\{s_j : j \in
J\})$-primitive face. For such a face the type-(1)-vertices are determined as
just explained, thus it is enough to find out which type-(2)-vertices belong to
$P$. We can describe them precisely as \[ (+ , X \cup z) \in P \Leftrightarrow
z \in A' \mbox{ and } z \not\in \bigcup_{j \in J} B_j \mbox{ and } \forall i
\in I : (\{z,s_i\} \in E(G) \Rightarrow z \in A_i) \] and similarly \[ (-,X
\cup z) \in P \Leftrightarrow z \not\in A \mbox{ and } z \not\in \bigcup_{i \in
I} A_i \mbox{ and } \forall j \in J : (\{z,s_j\} \in E(G) \Rightarrow z \in
B_j). \] These conditions tell us that for each $z\in C$, either there is an $X\subseteq S$
such that $(+, X \cup z) \in P$ or there is an $X$ such that $(-,X \cup z) \in
P$, or none of these is true. Furthermore, all three cases can be controlled
independently and so we get the desired $3^{|C|}$ positive faces for fixed $\{
s_i : i \in I \}$ and $\{ s_j : j \in J \}$.

(ii) Each partition of $G$ that satisfies (A), automatically satisfies (B) for
$\overline{G}$, and the other way around. It is therefore enough to prove
$f_p(G) = |\Pi_A|$. This will be done by constructing a bijection $\mathcal{P}
\rightarrow \Pi_A$, where $\mathcal{P}$ is the set of all primitive faces of
$\hansen(G)$. For this purpose, we partition the domain and range as follows:

\begin{compactitem}

\item Denote by $\mathcal{P}(S^+, S^-)$ the set of all $(S^+,S^-)$-primitive
faces. Then \[ \mathcal{P} = \bigcup \big\{ \mathcal{P}(S^+,S^-) : S^+, S^-
\subseteq S \mbox{ disjoint and } S^+ \cup S^- \neq \emptyset \big\} \] is a
partition of $\mathcal{P}$.

\item Let $\Pi_A(S^+,S^-)$ be the set of all partitions of $G$ that satisfy (A)
and have $S^+, S^-$ fixed (so only $C^+, C^-$ vary). Then \[ \Pi_A = \bigcup
\big\{ \Pi_A(S^+,S^-) : S^+, S^- \subseteq S \mbox{ disjoint and } S^+ \cup S^-
\neq \emptyset \big\} \] is a partition of $\Pi_A$.

\end{compactitem}
From now on let $S^+, S^- \subseteq S$ be disjoint and $S^+ \cup S^- \neq
\emptyset$. We will describe mappings \[ \Psi_{(S^+,S^-)} :
\mathcal{P}(S^+,S^-) \rightarrow \Pi_A(S^+,S^-) \] and \[ \Phi_{(S^+,S^-)} :
\Pi_A(S^+,S^-) \rightarrow \mathcal{P}(S^+,S^-), \] that will turn out to be
inverse to each other. This of course shows that there exists a bijective
correspondence between different parts of the partitions, which allows us to
conclude the existence of a bijection $\mathcal{P} \rightarrow \Pi_A$. Define
$\Psi_{(S^+,S^-)}$ to be \[ \Psi_{(S^+, S^-)} : F \mapsto (C^+, C^-, C^0, S^+,
S^-, S^0), \] and for $\ve = \pm$ let \begin{equation}\label{image_partition}
C^\ve \cq \big\{ c \in C : (\ve,(S^\ve \setminus N(c)) \cup c) \in F \mbox{ and }
\forall\, J \subseteq S : (-\ve, J \cup c) \not\in F \big\}. \end{equation} Here
$N(c)$ again stands for the neighborhood of $c$ in $G$. On the other hand
define $\Phi_{(S^+,S^-)}$ to be \begin{align*} \Phi_{(S^+,S^-)} & : (C^+, C^-,
C^0, S^+, S^-, S^0) \mapsto\\ &\bigcap_{s \in S^+} [+, A_s' \cup s] \cap [+,
A_s \cup s] \cap\!\! \bigcap_{s \in S^-} [-, B_s' \cup s] \cap [-,B_s \cup s],
\end{align*} where $A_s' \cq C^+ \cap N(s)$, $A_s \cq N(s) \setminus C^-$,
$B_s' \cq C^- \cap N(s)$ and $B_s = N(s) \setminus C^+$. Let us use the
 abbreviations $\psi \cq \Psi_{(S^+, S^-)}$ and $\phi \cq \Phi_{(S^+, S^-)}$
for the rest of this proof.

Then we have $\psi \circ \phi = \mbox{id}_{\mathcal{P}(S^+,S^-)}$: Given a partition $\pi = (C^+,
C^-, C^0, S^+, S^-, S^0)$ it is sufficient to prove $\pi \subseteq \psi \circ
\phi (\pi)$, where inclusion is to be understood componentwise. This is
because both $\pi$ and its image are partitions by construction. Let us denote
the first component of the image by $D^+$, the second by $D^-$, and the third by
$D^0$. We begin by explaining why $C^+ \subseteq D^+$. If $c \in C^+$, then by
definition $c \in D^+$ only if
\begin{compactitem}
\item the vertex $v = (+, (S^+ \setminus N(c)) \cup c) \in \phi(\pi)$ and
\item for all $J \subseteq S$ the vertex $w_J = (-, J \cup c) \not\in \phi(\pi)$.
\end{compactitem} 
Concerning the first item, one observes that the stable set $(S^+
\setminus N(c)) \cup c$ does not hit any of the $B_s \cup s$, so $v$ is
 contained in all of the facets with a negative sign. For the facets with
a positive sign the containment is clear if $c \in A_s'$, and in case $c \not\in
A_s'$, we have $c \not\in N(s)$, i.e., $s \in S^+ \setminus N(c)$. Regarding the
second item, by (A) there exists a neighbor $s \in S^+ \cup S^-$ of $c$. If $s
\in S^+$, then $c \in C^+ \cap N(s) = A_s'$ and therefore $c \in A_s' \cup
s$ which rules out that $(-, J \cup c) \in \phi(\pi)$. If $s \in S^-$, then $c
\not\in B_s'$ by construction. So if $w_J \in \phi(\pi)$, we must have $s \in
J$ which contradicts $J \cup c$ being stable. These observations about the
two items above yield $c \in D^+$. The inclusion $C^- \subseteq D^-$ can be
proved similarly.

We continue by explaining $C^0 \subseteq D^0$, so assume $c \in C^0$. If $c
\not\in N(S^+ \cup S^-)$, then $N(c) \cap S \subseteq S^0$ and we get by the
vertex-facet incidences $(+, S^+ \cup c)$, $(-, S^- \cup c) \in \phi(\pi)$, and
in addition $c \in D^0$. If $c \in N(S^+ \cup S^-)$, we can assume w.l.o.g.\
that $\{c,s\} \in E(G)$ for some $s \in S^+$. Then $c \in C^0 \cap N(s)
\subseteq A_s$, which means $(-, J \cup c) \not\in \phi(\pi)$. But we also must
have $c \not\in A_s'$, from which we get $(+, J \cup c) \not\in \phi(\pi)$,
since $s \not\in J$ if $J \cup \{c\}$ is stable. This shows $c \in D^0$, and we
therefore have $\psi \circ \phi = \mbox{id}_{\mathcal{P}(S^+, S^-)}$.

Furthermore, we can deduce $\phi \circ \psi = \mbox{id}_{\Pi_A(S^+,S^-)}$:
Given a primitive face \[ F = \bigcap_{s \in S^+} [+, A_s' \cup s] \cap [+,A_s
\cup s] \cap \!\! \bigcap_{s \in S^-} [-, B_s' \cup s] \cap [-, B_s \cup s],
\] we need to show $\phi \circ \psi (F) = F$. Both $F$ and its image are
$(S^+, S^-)$-primitive faces. Such faces contain type-(1)-vertices $(\ve, J)$
if and only if $S^\ve \subseteq J \subseteq S \setminus S^{-\ve}$; as usual this
follows from the vertex-facet incidences. So $F$ and $\phi \circ
\psi (F)$ contain the same type-(1)-vertices and thus we only need to show that
they also contain the same type-(2)-vertices.

We will begin by showing that if $(\ve, J \cup c) \in F$, then $(\ve, J \cup c)
\in \phi \circ \psi (F)$. To this end we distinguish two cases. 

$1)$ Assume there exists $K \subseteq S$ such that $(-\ve, K \cup c) \in F$.
This means that $c$ cannot be in $A_s$ or $B_s$ for $s \in S^+$ or $s \in S^-$,
respectively. So because of our assumptions, we must have $S^{-\ve} \subseteq
K$ and $S^\ve \subseteq J \subseteq S \setminus S^{-\ve}$. From this we get that $c$
has no neighbor in $S^{-\ve}$. Altogether, this yields $(\ve, J \cup c) \in
[-\ve, (C^{-\ve} \cap N(s)) \cup s]$ for all $s \in S^{-\ve}$, and $(\ve, J
\cup c) \in [\ve, (C^\ve \cap N(s)) \cup s]$ for all $s \in S^\ve$. Hence,
$(\ve, J \cup c) \in \phi \circ \psi (F)$.

$2)$ The other case is $(-\ve, K \cup c) \not\in F$ for all $K \subseteq S$.
If $s \in S^{\ve}$ is not adjacent to $c$, we must have $s \in J$, i.e.,
$S^{\ve} \setminus N(c) \subseteq J$. According to (\ref{image_partition}) we also have
$c \in C^{\ve}$. So for every $s \in S^\ve$, either $s \in J$ or $c \in C^\ve
\cap N(s)$. From this we get that $(\ve, J \cup c)$ is contained in every facet
defining $\phi \circ \psi (F)$ of sign $\ve$. Since $J \cap S^{-\ve} =
\emptyset$, we conclude that $(\ve, J \cup c)$ is also contained in every facet
of sign $-\ve$. This proves $(\ve, J \cup c) \in \phi \circ \psi (F)$.

Finally, we need to prove that if $(\ve, J \cup c) \in \phi \circ \psi (F)$,
then $(\ve, J \cup c) \in F$. Again, we distinguish between two cases. We
know from the vertex-facet incidences that $J \subseteq S \setminus S^{-\ve}$ for all
$(\ve, J \cup c) \in \phi \circ \psi (F)$.

$1)$ Let $S^\ve \subseteq J$. For the sake of contradiction assume $(\ve, J
\cup c) \not\in F$. Then it is easy yet tedious to show that one must have
$(\ve, (S^{\ve} \setminus N(c)) \cup c) \not\in F$. 
(For this recall that $J \cup c$ is stable and that the facets defining $F$ are induced by
cliques, and then prove the contrapositive statement.)
This means that $c \in D^{\ve}$,
where $D^{\ve}$ is again a component of $\psi(F)$. From this in turn we can
conclude that $c \not\in D^{\ve} \cup N(s)$ and $c \not\in N(s) \setminus D^{\ve}$ for some
$s \in S^{-\ve}$, i.e., in particular $c \in N(s)$. Therefore $c \in
(D^0 \cup D^{-\ve}) \cap N(s)$, so $(\ve, J \cup c) \not\in [-\ve, (N(s) \setminus
D^{\ve}) \cup s]$. But this contradicts $(\ve, J \cup c) \in \phi(\psi(F))$.

$2)$ If on the other hand $S^\ve \not\subseteq J$, then there exists $s \in
S^{\ve}$ with $s \not\in J$. Because $(\ve, J \cup c) \in \phi(\psi(F))$ we
then must have $c \in D^{\ve} \cap N(s)$, where $D^{\ve}$ is a component of
$\phi(F)$. So in particular $c \in D^{\ve}$, which of course means $(\ve,
(S^{\ve} \setminus N(c)) \cup c) \in F$. Now it can be easily (but again tediously)
deduced that $(\ve, J \cup c) \in F$.

This shows $\psi \circ \phi = \mbox{id}_{\Pi_{A}(S^+,S^-)}$, and therefore
establishes the bijection and finishes the proof.
\end{proof}

In particular this theorem says that the partition of the split graph does not
play any role in the number of vertices of the corresponding Hansen polytope.
So instead of $p_G(C,S)$ we will write $p_G$ from now on. What we know about
this function is summarized by the following corollary.

\begin{corollary}
Let $G = C \cup S$ be a split graph on $d-1$ nodes. Then
\[
\s(\hansen(G)) = 3^d + 16 \cdot \ell, \quad \mbox{for some } \ell \in \mathbb{N},
\]
with $\ell = 0$ if and only if $G$ is threshold.
\end{corollary}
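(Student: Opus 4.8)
The plan is to start from Theorem~\ref{main_thm}, which gives $\s(\hansen(G)) = 3^d + p_G$ and thereby reduces the corollary to two claims: that $p_G$ is always divisible by $16$, and that $p_G = 0$ exactly when $G$ is threshold. Recall that $p_G$ counts the nontrivial partitions $(C^+, C^-, C^0, S^+, S^-, S^0)$ satisfying (A) and (B).

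For the divisibility I would first observe that conditions (A) and (B), together with the nontriviality requirement, depend only on the \emph{supports} $C^\pm \cq C^+ \cup C^-$ and $S^\pm \cq S^+ \cup S^-$, and not on how these sets are further distributed by sign. Hence the partitions counted by $p_G$ fall into classes indexed by admissible support pairs $(C^\pm, S^\pm)$, and each class has exactly $2^{|C^\pm|} \cdot 2^{|S^\pm|}$ members, one for each sign assignment on the support. The key step is to show $|C^\pm| \geq 2$ and $|S^\pm| \geq 2$ for every admissible nontrivial pair. Indeed, (A) forces $S^\pm \neq \emptyset$ once $C^\pm \neq \emptyset$, and (B) forces the converse, so both supports are nonempty; and if, say, $C^\pm = \{c\}$ were a singleton, then (A) would provide a neighbor $s \in S^\pm$ of $c$, while (B) applied to $s$ would demand a nonneighbor of $s$ inside $C^\pm = \{c\}$, which is absurd. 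Thus $|C^\pm| + |S^\pm| \geq 4$, every class contributes a multiple of $2^4 = 16$, and $p_G = 16\ell$ for some nonnegative integer $\ell$.

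It then remains to characterize $\ell = 0$. One direction is immediate: if $G$ is threshold, Corollary~\ref{hansen_threshold_3d} gives $\s(\hansen(G)) = 3^d$, hence $p_G = 0$ and $\ell = 0$. For the converse I would argue by contraposition and exhibit one admissible partition when $G$ is not threshold. Since within split graphs the threshold graphs are precisely the $P_4$-free ones (split graphs are already $2K_2$- and $C_4$-free, and threshold graphs are exactly the $(2K_2, C_4, P_4)$-free graphs; see Mahadev \& Peled \cite{mahadev1995threshold}), a non-threshold split graph contains an induced path $a - b - c - d$. In the fixed clique/stable partition of $G$ this path must use exactly two clique nodes and two stable nodes; a short check on which complementary pair is adjacent forces $b, c \in C$ and $a, d \in S$. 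Taking $C^\pm = \{b,c\}$ and $S^\pm = \{a,d\}$ with any signs then satisfies (A) and (B): $b$ has the neighbor $a$ and $c$ the neighbor $d$ in $S^\pm$, while $a$ has the nonneighbor $c$ and $d$ the nonneighbor $b$ in $C^\pm$. This gives $p_G \geq 2^4 = 16$, so $\ell \geq 1$.

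The routine parts are the support-counting and the verification of (A)/(B) on the exhibited partition. The main obstacle is the converse of the characterization: one must invoke (or reprove from the forbidden-subgraph description) that a non-threshold split graph carries an induced $P_4$, and then correctly locate its vertices relative to the clique/stable split. Once the $P_4$ is placed with its middle vertices in $C$ and its endpoints in $S$, the admissible partition essentially writes itself.
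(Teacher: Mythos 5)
Your proposal is correct and follows essentially the same route as the paper: reduce to Theorem~\ref{main_thm}, show every admissible partition has $|C^+\cup C^-|\geq 2$ and $|S^+\cup S^-|\geq 2$ so that the free sign assignments force $16\mid p_G$, and settle the equality case via the fact that non-threshold split graphs are exactly those containing an induced $P_4$, whose middle vertices and endpoints yield an admissible partition. Your version is slightly more careful than the paper's in two spots (organizing the count by support classes, and verifying that the $P_4$'s middle vertices must lie in $C$ and its endpoints in $S$), but the argument is the same.
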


\begin{proof}
Let us first establish that $p_G = 16 \cdot \ell$. Assume that $C = C^+ \disjoint
C^- \disjoint C^0$ {and} $S = S^+ \disjoint S^- \disjoint S^0$
is given. If $C^+ \cup C^- = \emptyset$, then (B) is only satisfied if $S^+
\cup S^- = \emptyset$. Similarly, if $S^+ \cup S^- = \emptyset$, we have $C^+
\cup C^- = \emptyset$ because of (A). In both cases we deal with the trivial
partition $C^0=C$, $S^0=S$ that is not counted by $p_G$, and thus can be
ignored. If $C^+ \cup C^- = \{ c \}$, then by (A) there exists a neighbor of
$c$ in $S^+ \cup S^-$. By (B) again, this neighbor must have a nonneighbor in
$C^+ \cup C^-$, which clearly cannot be. So also this case is not counted by
$p_G$ and can be ignored as well. By similar reasoning, we can disregard the
case $S^+ \cup S^- = \{ s \}$. Therefore, we must have $|C^+ \cup C^-| \geq 2$
and $|S^+ \cup S^-| \geq 2$. Since we can assign the elements to $C^+, C^-$ or
$S^+, S^-$ in an arbitrary way, we must have $p_G = 16 \cdot \ell$. 

Now $\ell = 0$ if and only if $p_G = 0$. But if $G$ has a path on four nodes
$P_4$ as an induced subgraph, then the partition where $C^+$ is the two middle
nodes of $P_4$, $S^+$ is the two endpoints and $C^-=S^-=\emptyset$, satisfies the
conditions (A) and (B). So if $\ell = 0$, then $G$ is a split graph with no
induced path of four nodes. But by Theorem 1.2.4 in
\cite{mahadev1995threshold}, this happens exactly when $G$ is threshold. On the
other hand, if $G$ is threshold then $\hansen(G)$ is a Hanner polytope by
Theorem \ref{hansen_threshold}, so $\ell = 0$.
\end{proof}

\subsection{High-dimensional Hansen polytopes with few faces.} In the rest of this
section we will study a construction that leads us to high-dimensional Hansen
polytopes with few faces. To this end, consider a threshold graph $T$ on $m$
nodes and a split graph $G = C \cup S$ on $n$ nodes. We construct a new
graph $G \ltimes T$ by taking the union of $G$ and $T$ and adding edges between
every node of $C$ and every node of $T$. Figure \ref{fig_constr} is an illustration of
our construction with $G$ being the path on four nodes.

\begin{figure}[ht]
\begin{center}
\begin{tikzpicture}
\draw (-3,0) -- (3,0);
\filldraw[fill=white] (-3,0) circle (.1cm); 
\filldraw[fill=black] (-1,0) circle (.1cm);
\filldraw[fill=black] (1,0) circle (.1cm);
\filldraw[fill=white] (3,0) circle (.1cm);
\draw (-1,0) -- (-1.3,-1) -- (1,0); 
\draw (-1,0) -- (-0.9,-1.3) -- (1,0); 
\draw (-1,0) -- (0,-1.1) -- (1,0); 
\draw (-1,0) -- (1.2,-1.5) -- (1,0); 
\draw (0,-1.3) ellipse (2.2 and .6);
\coordinate [label=left:$T$] (A) at (0,-1.5);
\end{tikzpicture}
\end{center}
\caption{Appending a threshold graph to a split graph}\label{fig_constr}
\end{figure}
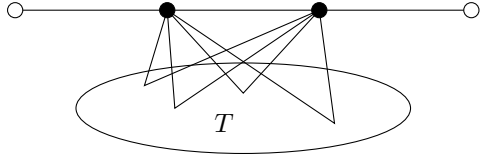

\noindent It is clear that the resulting graph is again a split graph and therefore perfect.

\begin{proposition}
Let $G = C \cup S$ be a split graph on $n$ nodes. Then, for any given
threshold graph $T$ on $m$ nodes, we have
\[
\s(\hansen(G \ltimes T)) = 3^{m+n+1} + p_G.
\]
This means $p_{G \ltimes T} = p_G$, so $p_{G \ltimes T}$ is independent of $T$. 
\end{proposition}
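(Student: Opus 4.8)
The plan is to reduce the statement to Theorem~\ref{main_thm} and thereby isolate the identity $p_{G \ltimes T} = p_G$ as the only thing that really needs proof. First I would record that $G \ltimes T$ is again a split graph. Writing $T$ with its own split $V(T) = C_T \disjoint S_T$ into a clique $C_T$ and a stable set $S_T$, the set $C \cup C_T$ is a clique of $G \ltimes T$ (since $C$ is joined completely to $V(T)$, and $C_T$ is already a clique), while $S \cup S_T$ is stable (there are no edges between $S$ and $T$). As $G \ltimes T$ has $m+n$ nodes, Theorem~\ref{main_thm} applied with this split yields $\s(\hansen(G \ltimes T)) = 3^{m+n+1} + p_{G \ltimes T}$. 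Hence it suffices to show $p_{G \ltimes T} = p_G$.

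The heart of the argument is the claim that in every partition of the clique $C\cup C_T$ and stable set $S\cup S_T$ of $G\ltimes T$ that is counted by $p_{G\ltimes T}$, all nodes of $T$ lie in the two zero-classes. Granting this claim, every \emph{active} node (one placed in a plus- or minus-class) belongs to $V(G)$. Since the $C$--$S$ adjacencies of $G \ltimes T$ coincide with those of $G$, while each node of $T$ is adjacent to all of $C$ and to none of $S$, conditions (A) and (B) for $G \ltimes T$ then restrict to precisely conditions (A) and (B) for $G$. Consequently, restricting a counted partition of $G \ltimes T$ to $V(G)$, and conversely extending a counted partition of $G$ by placing all of $C_T$ in the clique's zero-class and all of $S_T$ in the stable set's zero-class, are mutually inverse operations that preserve (A), (B) and the nontriviality of the active part. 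This bijection gives $p_{G \ltimes T} = p_G$, and the proposition follows.

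It remains to prove the claim, which I expect to be the main obstacle, and this is exactly where the hypothesis that $T$ is threshold is used. Suppose for contradiction that some node of $T$ is active. Because a node of $C_T$ has no stable neighbor outside $S_T$, and a node of $S_T$ has no clique non-neighbor outside $C_T$, conditions (A) and (B) force both the set $\mathcal{C}$ of active clique nodes of $T$ and the set $\mathcal{S}$ of active stable nodes of $T$ to be nonempty. Now I would invoke the nested-neighborhood characterization of threshold graphs (equivalently, their $P_4$-freeness from \cite{mahadev1995threshold}): the neighborhoods $N_T(\cdot)$ are totally preordered by inclusion up to the two vertices being compared. Choose $s \in \mathcal{S}$ whose neighborhood is maximal in this order. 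By (B), $s$ has an active non-neighbor $c \in \mathcal{C}$; by (A), $c$ has an active neighbor $s' \in \mathcal{S}$. Since $s$ and $s'$ are both stable they are non-adjacent, so maximality of $s$ specializes to $N_T(s') \subseteq N_T(s)$ with no correction terms; then $c \in N_T(s') \subseteq N_T(s)$ contradicts $c \not\sim s$. Thus no node of $T$ is active, which establishes the claim. The extremal choice of $s$ is what makes this clean; a direct hunt for a forbidden induced $P_4$ on the vertices produced by chaining (A) and (B) is also possible but requires more bookkeeping.
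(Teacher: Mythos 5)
Your proof is correct, and its overall architecture matches the paper's: reduce via Theorem~\ref{main_thm} to the identity $p_{G\ltimes T}=p_G$, observe that the adjacencies between $C$ and $S$ are unchanged and that $T$-nodes see all of $C$ and none of $S$, and then establish the key claim that every counted partition of $G\ltimes T$ places all of $V(T)$ in the zero-classes. The one genuine divergence is in how that claim is proved. The paper works directly from the definition of a threshold graph: fixing the insertion order $v_1,\dots,v_m$, condition (A) forces each active node of $C_T$ to have an active $S_T$-neighbor inserted strictly earlier, and condition (B) forces each active node of $S_T$ to have an active $C_T$-nonneighbor inserted strictly earlier, so an active $T$-node would start an infinite descent in the insertion order. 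You instead invoke the Chv\'atal--Hammer characterization that the vicinal preorder of a threshold graph is total, pick an active stable node $s$ with maximal neighborhood, and derive a contradiction from $c\in N_T(s')\subseteq N_T(s)$; this is airtight (the correction terms indeed vanish since $s,s'$ are nonadjacent, and $s'\neq s$ because $c\not\sim s$). Both are extremal arguments over a total order on $V(T)$; the paper's uses only the constructive definition already in play, while yours imports one more standard fact from \cite{mahadev1995threshold} but is arguably cleaner in that it needs no bookkeeping about insertion steps. Either way the bijection between counted partitions (restrict to $V(G)$, respectively extend by $C_T\subseteq C^0$, $S_T\subseteq S^0$) goes through exactly as you describe.
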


\begin{proof}
By definition the threshold graph $T$ can be built by successive adding of
isolated and dominating nodes. This induces an ordering on the nodes $v_1,
\ldots, v_m$ of $T$. Let $C_T \cq \{ v_i : v_i \mbox{ dominating at step } i
\}$ and $S_T \cq \{ v_i : v_i \mbox{ isolated at step } i \}$. This splits $T$
into a clique $C_T$ and a stable set $S_T$, which in turn splits $G \ltimes T$
into $C \cup C_T$ and $S \cup S_T$. By construction any node in $C_T$ and $S_T$
is connected to all nodes in $C$ and none in $S$. Now consider a partition
$(C^+ \cup C^- \cup C^0, S^+ \cup S^- \cup S^0)$ of $G \ltimes T$ that is
counted by $p_{G \ltimes T}(C \cup C_T, S \cup S_T)$. By (A) for all $x \in
(C^+ \cup C^-) \cap C_T$ there exists a neighbor $y \in (S^+ \cup S^-) \cap
S_T$, which means that in $T$ the once isolated node $y$ was inserted before
the once dominating node $x$. On the other hand, by (B), any given node $y \in
(S^+ \cup S^-) \cap S_T$ has to have a nonneighbor $z \in (C^+ \cup C^-) \cap
C_T$. Such a node $z$ was used before $y$ in the construction of $T$. These two
observations can only hold in the case $(C^+ \cup C^-) \cap C_T = \emptyset =
(S^+ \cup S^-) \cap S_T$. Therefore, for this partition we have $C_T \subseteq
C^0$ and $S_T \subseteq S^0$, which implies that $p_{G \ltimes T}(C \cup C_T, S
\cup S_T) = p_G$.
\end{proof}

This finally yields a series of high-dimensional Hansen polytopes with very few
faces.

\begin{corollary}
Let $P_4$ be a path on four nodes and $T$ be an arbitrary threshold graph on
$m$ nodes. Then 
\[ 
	\s(\hansen(P_4 \ltimes T)) = 3^{m + 5} + 16. 
\]
\end{corollary}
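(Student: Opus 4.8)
The plan is to combine the preceding Proposition with a direct evaluation of $p_{P_4}$. Since $P_4$ is a split graph on $n=4$ nodes, applying the Proposition with $G=P_4$ gives at once
\[
\s(\hansen(P_4 \ltimes T)) = 3^{m+4+1} + p_{P_4} = 3^{m+5} + p_{P_4},
\]
so the entire statement reduces to showing $p_{P_4} = 16$. This is the only genuinely new computation required; everything else is already encapsulated in the identity $p_{G \ltimes T} = p_G$ proved above.

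To evaluate $p_{P_4}$, I would write the path as $a\!-\!b\!-\!c\!-\!d$ and fix the split partition $C = \{b,c\}$ (the two middle nodes, which form a clique) and $S = \{a,d\}$ (the two endpoints, which form a stable set). The only adjacencies across this split are then $a$ with $b$ and $d$ with $c$. Next I would invoke the structural analysis from the earlier Corollary establishing $p_G = 16\cdot\ell$: any partition counted by $p_G$ must satisfy $|C^+ \cup C^-| \geq 2$ and $|S^+ \cup S^-| \geq 2$. Here $|C| = |S| = 2$, so these inequalities force $C^0 = S^0 = \emptyset$, i.e.\ $C^+ \cup C^- = \{b,c\}$ and $S^+ \cup S^- = \{a,d\}$. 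The only remaining freedom is the distribution of signs.

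It then remains to verify that every one of the $2^2 \cdot 2^2 = 16$ sign assignments satisfies conditions (A) and (B). For (A), each of $b,c$ has a neighbor in $S^+ \cup S^- = \{a,d\}$ (namely $a$ and $d$, respectively); for (B), each of $a,d$ has a nonneighbor in $C^+ \cup C^- = \{b,c\}$ (namely $c$ and $b$, respectively). Both conditions hold no matter how the signs are chosen, so all $16$ assignments are counted and $p_{P_4} = 16$. Substituting this back into the displayed reduction yields $\s(\hansen(P_4 \ltimes T)) = 3^{m+5} + 16$.

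I expect no real obstacle here: the heavy machinery is carried by the Proposition and by the fact $p_G = 16\cdot\ell$. The one point demanding a little care is confirming that $P_4$ contributes exactly $\ell = 1$ rather than more; but this is immediate, since $C$ and $S$ have only two elements each, leaving the empty sets as the sole admissible choice for $C^0$ and $S^0$ and hence no nontrivial partition type beyond the single one counted here.
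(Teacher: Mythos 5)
Your proposal is correct and follows the same route as the paper, whose proof simply declares $p_{P_4\ltimes T}=p_{P_4}=16$ to be ``an easy counting exercise''; you have carried out that exercise explicitly (and correctly), using the Proposition to reduce to $p_{P_4}$ and the $16\cdot\ell$ corollary to force $C^0=S^0=\emptyset$ before checking that all $16$ sign assignments satisfy (A) and (B).
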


\begin{proof}
Determining $p_{P_4 \ltimes T} = p_{P_4} = 16$ is an easy counting exercise.
\end{proof}

\bigskip

\noindent
\emph{Acknowledgements.} This work has been started while the first two authors
enjoyed the hospitality of the Centre de Recerca Matem\`atica in Barcelona. We
are grateful to Michael Joswig and Eugen Gawrilow for the development of 
\texttt{polymake}~\cite{polymake}: Without this software a considerable part of our work
would not have been possible. 


\end{document}